\theoremstyle{definition}
\newtheorem{theorem}{Theorem}[section]
\newtheorem{lemma}[theorem]{Lemma}
\newtheorem{corollary}[theorem]{Corollary}
\newtheorem{remark}[theorem]{Remark}
\numberwithin{equation}{section}
\newtheorem{problem}{Problem}
\newtheorem{acknowledgements}{Acknowledgement}
\def\<{\left < }
\def\>{\right >}
\def\({\left ( }
\def\){\right )}
\def\C2{${\bf C}^2$}
\begin{document}

\title[Gen. ineq. and shape op. ineq. of cont. CR-wp in cosympl. sp. form]{General inequalities and new shape operator inequality for contact CR-warped product submanifolds in cosymplectic space form} 

\author[A. Mustafa]{ABDULQADER MUSTAFA}
\address{Department of Mathematics, Faculty of Arts and Science, Palestine Technical University, Kadoorei, Tulkarm, Palestine}
\email{abdulqader.mustafa@ptuk.edu.ps}
\author[A. Assad]{ATA ASSAD}
\address{Department of Mathematics, Faculty of Arts and Science, Palestine Technical University, Kadoorei, Tulkarm, Palestine}
\email{a.asad@ptuk.edu.ps}
\author[C. \"Ozel]{CENAP \"OZEL}
\address{Department of Mathematics, Faculty of Science, King Abdulaziz University, 21589 Jeddah, Saudi Arabia}
\email{cozel@kau.edu.sa}
\author[A. Pigazzini]{ALEXANDER PIGAZZINI}
\address{Mathematical and Physical Science Foundation, 4200 Slagelse, Denmark}
\email{pigazzini@topositus.com}

\maketitle
\begin{abstract}
We establish two main inequalities; one for the norm of the second fundamental form and the other for the matrix of the shape operator. The results obtained are for cosymplectic manifolds and, for these, we show that the contact warped product submanifolds naturally possess a geometric property; namely $\mathcal{D}_1$-minimality which, by means of the Gauss equation, allows us to obtain an optimal general inequality. For sake of generalization, we state our hypotheses for nearly cosymplectic manifolds, then we obtain them as particular cases for cosymplectic manifolds. 
\\
For the other part of the paper, we derived some inequalities and applied them to construct and introduce a shape operator inequality for cosimpleptic manifolds involving the harmonic series.
\\
As further research directions, we have addressed a couple of open problems arose naturally during this work and which depend on its results.

\noindent{\it{AMS Subject Classification (2010)}}: {53C15; 53C40; 53C42; 53B25}

\noindent{\it{Keywords}}:{ Contact CR-warped product submanifolds, cosymplectic manifolds,  shape operator inequality.}

\end{abstract}


\sloppy
\section{Introduction}
Warped product is a very important mathematical tool in the theory of general relativity. These mathematical structure are still widely studied today, because they can provide the best mathematical models of our universe, as for example the Robertson-Walker models, the Friedmann cosmological models, or the relativistic model of the Schwarzschild spacetime, which admits a warped product construction \cite{iijj77}.  

As seen in \cite{genIneq}, the aim of this paper continues in the search for the control of extrinsic quantities in relation to intrinsic quantities of Riemannan manifolds through the Nash theorem and consequent applications (\cite{2233ee}, \cite{55kk99}).
This motivation, led Chen to face the following problem:

\begin{problem}\label{prob3}
\cite{aallr4} Establish simple relationships between the main extrinsic invariants and the main intrinsic invariants of a submanifold.
\end{problem}
 
Several famous results in differential geometry, such as isoperimetric inequality, Chern-Lashof's inequality, and Gauss-Bonnet's theorem among others, can be regarded as results in this respect. The current paper aims to continue this sequel of inequalities.   
\\
This current paper is organized to include six sections. The section four, based on the results already obtained in \cite{genIneq}, states a general inequality involving the scalar curvature and the squared norm of the second fundamental form for a contact CR-warped product submanifold in a cosymplectic space form (i.e., \textit{Theorem 4.2}). In section five, we introduce a new type of inequalities for the shape operator matrix which involve the harmonic serie (\textit{Theorem 5.2} and \textit{Theorem 5.5}). In the final section, we assume that two open problems arose naturally due to the results of this work.

\section{Preliminaries}

As a preliminary we refer to what the authors have already explained in \cite{genIneq}(\textit{Section 2}).

\section{The Existence of $\mathcal{D}_1$-Minimal Contact Warped product Submanifolds in Cosymplectic  Manifolds}

Using the same calculations made in \cite{genIneq}, this sections aims to prove the existence of  $\mathcal{D}_1$-Minimal contact warped product submanifolds in cosymplectic  manifolds; namely, the contact $CR$-warped product submanifold of the type $M^n=N_T\times _fN_\perp$, where the characteristic vector field $\xi$ is tangent to $N_T$.

It is well-known that the contact $CR$-warped product submanifold of the type $M^n=N_\perp\times _fN_T$ in nearly cosymplectic manifolds is trivial warped product, reversing the factors then we consider the contact $CR$-warped product submanifold of the type $M^n=N_T\times _fN_\perp$ in nearly cosymplectic manifolds $\tilde {M}^{2l+1}$.

For the purpose of generalization, we consider the contact CR-warped product submanifolds in a nearly cosymplectic manifold.
The computations, already shown in \cite{genIneq} (i.e. \textit{Lemma 3.1, Lemma 3.3 and Corollary 3.2}), will not change for this case, and based on this, it is trivial to further extend the same results even for cosmplectic manifolds as a special case. Now, considering the above, as a particular case of the \textit{Lemma 4.1} (present in \cite{genIneq}),we finally can state the following new corollary:

\begin{corollary}\label{104}
The contact $CR$-warped product submanifold of  type $M^n=N_T\times _fN_\perp$ is $\mathcal{D}_T$-minimal in cosymplectic  manifolds.
\end{corollary}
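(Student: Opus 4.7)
The plan is to obtain the statement as a direct specialization of the nearly-cosymplectic result already proved in \cite{genIneq} (Lemma 4.1), exploiting the fact that every cosymplectic manifold is nearly cosymplectic. First I would recall the defining condition for a cosymplectic structure, namely $\tilde{\nabla}\phi=0$ (and consequently $\tilde{\nabla}\xi=0$), and note that this immediately implies the nearly cosymplectic identity $(\tilde{\nabla}_X\phi)X=0$ for every vector field $X$ on $\tilde M^{2l+1}$. Hence every hypothesis used in Lemma 4.1 of \cite{genIneq} is available in the cosymplectic setting.

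Next I would unpack what $\mathcal{D}_T$-minimality means for $M^n=N_T\times_f N_\perp$: choosing a local orthonormal frame $\{e_1,\dots,e_{2p},e_{2p+1}=\xi\}$ of the invariant distribution $\mathcal{D}_T$ (so $\mathcal{D}_T=TN_T$), the condition reduces to
\[
\sum_{i=1}^{2p+1} h(e_i,e_i)=0,
\]
where $h$ is the second fundamental form of $M^n$ in $\tilde M^{2l+1}$. Using Lemma 3.1, Lemma 3.3 and Corollary 3.2 of \cite{genIneq} — which, as the authors note, carry over unchanged from the nearly cosymplectic setting — one can pair the frame as $\{e_i,e_{i+p}=\phi e_i\}_{i=1}^{p}$ together with $\xi$, and show that for each such pair the normal components $h(e_i,e_i)+h(\phi e_i,\phi e_i)$ vanish, while $h(\xi,\xi)=0$ follows from $\tilde{\nabla}\xi=0$ and the fact that $\xi$ is tangent to $N_T$.

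The main step is to check that the pairwise cancellations used in the nearly cosymplectic argument do indeed specialize without loss; in particular the mixed terms involving $(\tilde{\nabla}_X\phi)Y$ that have to be controlled in the nearly cosymplectic case vanish identically here because $\tilde{\nabla}\phi=0$, so all the estimates collapse to exact equalities. Combining this with the warped-product relation $\nabla_X Z=(X\ln f)Z$ for $X\in\Gamma(TN_T)$, $Z\in\Gamma(TN_\perp)$ (which does not affect the $\mathcal{D}_T$-trace), one concludes that the mean curvature vector along $\mathcal{D}_T$ is zero.

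The main obstacle I anticipate is purely notational rather than conceptual: one must make sure that the pairing of the frame is compatible with $\xi\in\mathcal{D}_T$ (so that the frame has odd total dimension $2p+1$) and that the vanishing $h(\xi,\xi)=0$ is not absorbed into the wrong distribution when invoking the cited lemmas. Once this bookkeeping is done, the corollary follows by direct substitution of $\tilde{\nabla}\phi=0$ into the nearly cosymplectic identities established in \cite{genIneq}.
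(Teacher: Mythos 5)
Your proposal is correct and follows essentially the same route as the paper: the corollary is obtained by specializing Lemma 4.1 of \cite{genIneq} (together with Lemma 3.1, Lemma 3.3 and Corollary 3.2 there) from the nearly cosymplectic to the cosymplectic case, using that $\tilde{\nabla}\phi=0$ implies the nearly cosymplectic condition. The only difference is that you spell out the frame pairing and the vanishing $h(\xi,\xi)=0$ explicitly, whereas the paper simply cites the earlier lemmas without repeating that bookkeeping.
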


\section{Special Inequalities and Applications}

Considering \cite{genIneq}(\textit{Theorem 6.1}), we obtain:

\begin{theorem}\label{299}
Let $\varphi :M^n=N_1\times _fN_2 \longrightarrow \tilde M^{2m+1}(c_c)$ be a $\mathcal {D}_1$-minimal isometric immersion of a contact warped product submanifold $M^n$ into a cosymplectic space form $\tilde {M}(c_c)^{2m+1}$. Then, we have
 \begin{enumerate}
\item[(i)]$||h||^2\ge 2n_2 \biggl(||\nabla (\ln f)||^2-\Delta (\ln f)+(n_1-1)\frac{c_{c}}{4}\biggr).$
\item[(ii)] The equality in (i) holds identically if and only if $N_1$, $N_2$ and $M^n$ are totally geodesic, totally umbilical and minimal submanifolds in $\tilde M^{2m+1}(c_c)$, respectively.
\end{enumerate}
\end{theorem}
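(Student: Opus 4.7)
The plan is to derive this inequality as a specialization of Theorem 6.1 in \cite{genIneq}, which proves the analogous statement in the broader nearly cosymplectic setting, combined with Corollary \ref{104}. A cosymplectic manifold is automatically nearly cosymplectic, so the general machinery of \cite{genIneq} applies; moreover, Corollary \ref{104} establishes that every contact CR-warped product of the form $N_T \times_f N_\perp$ in a cosymplectic manifold is automatically $\mathcal{D}_T$-minimal, which absorbs the $\mathcal{D}_1$-minimality hypothesis and makes the inequality apply unconditionally in the cosymplectic space form setting.

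The body of the computation proceeds by applying the Gauss equation to orthonormal bases $\{X_i\}_{i=1}^{n_1}$ of $\mathcal{D}_1 = TN_1$ (with $\xi \in \mathcal{D}_1$) and $\{Z_j\}_{j=1}^{n_2}$ of $\mathcal{D}_2 = TN_2$, and summing over all mixed pairs $(X_i, Z_j)$. The ambient contribution $\sum_{i,j}\tilde K(X_i \wedge Z_j)$ is evaluated using the explicit form of the curvature tensor on a cosymplectic space form of constant $\phi$-sectional curvature $c_c$; the key subtlety here is that $\xi \in \mathcal{D}_1$ itself contributes trivially (via $\eta(\xi)=1$ annihilating the relevant terms), yielding the coefficient $(n_1-1)$ in front of $c_c/4$ rather than $n_1$. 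The intrinsic contribution $\sum_{i,j}K(X_i \wedge Z_j)$ is evaluated from the standard warped product curvature identity, which produces the combination $\|\nabla(\ln f)\|^2 - \Delta(\ln f)$ after rewriting $\mathrm{Hess}(f)/f$ in terms of $\ln f$.

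Assembling these two pieces, the summed Gauss equation reads
\[
\sum_{i,j}\|h(X_i, Z_j)\|^2 \;-\; \sum_{i,j}\langle h(X_i,X_i),\, h(Z_j,Z_j)\rangle \;=\; \sum_{i,j}\bigl(K(X_i\wedge Z_j) - \tilde K(X_i\wedge Z_j)\bigr).
\]
The $\mathcal{D}_1$-minimality $\sum_i h(X_i,X_i)=0$ supplied by Corollary \ref{104} annihilates the cross-term on the left. By the symmetry of $h$, each mixed component $h(X_i, Z_j)$ is counted twice inside the full squared norm $\|h\|^2$, which is exactly where the factor $2$ in the target bound comes from; dropping the remaining nonnegative pure components from $\|h\|^2$ then gives (i).

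For part (ii), I would trace the equality cases step by step: dropping the pure components becomes an equality iff $h(X_i,X_k)\equiv 0$ and $h(Z_j,Z_\ell)\equiv 0$. The first is precisely total geodesy of $N_1$ in $\tilde M$; the second, coupled with the warped-product structure (which already makes $N_2$ extrinsically umbilical inside $M^n$ with mean vector $-\nabla\ln f$), promotes $N_2$ to being totally umbilical in $\tilde M$; and both together with $\mathcal{D}_1$-minimality force minimality of $M^n$ itself in $\tilde M$. The principal obstacle is not conceptual but accounting: careful sign bookkeeping in the warped product curvature identity and the clean separation of the $\xi$-contribution in the cosymplectic curvature tensor must be handled, and these are the precise places where Theorem 6.1 of \cite{genIneq} is invoked so that the present result truly appears as a corollary once Corollary \ref{104} has been applied.
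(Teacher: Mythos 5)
Your proposal matches the paper's own (very brief) proof, which is simply to specialize Theorem 6.1 of \cite{genIneq} to a cosymplectic space form; your Gauss-equation sketch is the standard underlying argument of that cited result, so the approach is essentially the same. One small point: for this particular statement the $\mathcal{D}_1$-minimality is a \emph{hypothesis}, so invoking Corollary \ref{104} is unnecessary here (it is what the paper uses afterwards to drop that hypothesis in the contact CR-warped product version), but this redundancy does not affect correctness.
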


Since contact $CR$-warped product submanifolds are $\mathcal{D}_1$-minimal in a cosymplectic space form $\tilde {M}(c_c)^{2m+1}$, then:

\begin{theorem}\label{299}
Let $\varphi :M^n=N_T\times _fN_\perp \longrightarrow \tilde M^{2m+1}(c_c)$ be an  isometric immersion of a contact $CR$-warped product submanifold $M^n$ into a cosymplectic space form $\tilde {M}(c_c)^{2m+1}$. Then, we have
 \begin{enumerate}
\item[(i)]$||h||^2\ge 2n_2 \biggl(||\nabla (\ln f)||^2-\Delta (\ln f)+(n_1-1)\frac{c_{c}}{4}\biggr).$
\item[(ii)] The equality in (i) holds identically if and only if $N_T$, $N_\perp$ and $M^n$ are totally geodesic, totally umbilical and minimal submanifolds in $\tilde M^{2m+1}(c_c)$, respectively.
\end{enumerate}
\end{theorem}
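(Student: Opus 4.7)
The plan is to observe that this theorem is essentially an immediate specialization of the preceding Theorem~4.1 (the first statement labelled \ref{299}) once we feed it the right minimality hypothesis. The crucial bridge is Corollary~\ref{104}, which asserts that any contact $CR$-warped product $N_T\times_f N_\perp$ with $\xi\in TN_T$ inside a cosymplectic manifold is automatically $\mathcal{D}_T$-minimal. Identifying $N_1=N_T$ and $N_2=N_\perp$, the $\mathcal{D}_T$-distribution coincides with $\mathcal{D}_1$, so this corollary supplies for free the hypothesis that was postulated in Theorem~4.1.

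First I would fix the decomposition $TM=\mathcal{D}_T\oplus\mathcal{D}_\perp$ with $\mathcal{D}_T=TN_T$ (containing $\xi$) and $\mathcal{D}_\perp=TN_\perp$, set $n_1=\dim N_T$, $n_2=\dim N_\perp$, and note that these match the indexing used in Theorem~4.1. Next, I would invoke Corollary~\ref{104} to conclude that the mean curvature vector of the $\mathcal{D}_T$-component of $h$ vanishes, i.e.\ the immersion is $\mathcal{D}_1$-minimal in the sense required by the hypothesis of Theorem~4.1. At this point every assumption of Theorem~4.1 is satisfied, and statement (i) follows verbatim with $c_c$ being the cosymplectic $\varphi$-sectional curvature of $\tilde M^{2m+1}(c_c)$.

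For part (ii), the equality characterization is inherited directly from the equality clause of Theorem~4.1. The only thing I would actually check is that the three conditions---$N_T$ totally geodesic, $N_\perp$ totally umbilical and $M^n$ minimal in $\tilde M^{2m+1}(c_c)$---are exactly the specializations of the conditions stated there for the generic factors $N_1,N_2$, which is immediate since the Gauss formulas for the warped product and the $\mathcal{D}_1$-minimality already used force the $\mathcal{D}_T\otimes\mathcal{D}_T$, $\mathcal{D}_\perp\otimes\mathcal{D}_\perp$ and trace parts of $h$ to enter the inequality separately.

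There is essentially no obstacle here beyond bookkeeping: the whole content of the theorem is the observation that $\mathcal{D}_1$-minimality, which in Theorem~4.1 had to be imposed, is now a theorem (Corollary~\ref{104}) for contact $CR$-warped products in the cosymplectic setting. The mildly delicate point, if any, is to make sure that $\xi\in TN_T$ is used consistently so that the contribution of the characteristic direction to $\|\nabla(\ln f)\|^2$ and $\Delta(\ln f)$ is handled correctly; since $\xi(\ln f)=0$ for a contact $CR$-warped product (a standard fact already used in \cite{genIneq}), this contribution vanishes and the inequality takes the stated clean form.
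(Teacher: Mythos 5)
Your proposal matches the paper's own argument: the paper proves this theorem simply by noting (via Corollary~\ref{104}) that contact $CR$-warped products $N_T\times_f N_\perp$ are $\mathcal{D}_T$-minimal in the cosymplectic setting, so the preceding $\mathcal{D}_1$-minimal inequality (itself imported from Theorem~6.1 of \cite{genIneq}) applies verbatim with $N_1=N_T$, $N_2=N_\perp$. Your additional remarks on the equality case and on $\xi(\ln f)=0$ are consistent bookkeeping and do not change the route.
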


\begin{remark}
In Euclidean cosymplectic space forms, part (i) of the above two inequalities reduces to

$$||h||^2\ge 2n_2 \biggl(||\nabla (\ln f)||^2-\Delta (\ln f)\biggr).$$
\end{remark}

\section{Inequalities of the Shape Operator Matrix and harmonic serie}

In this section we consider two important relations, the first is:
$$(I)\; \;  h(\xi, \xi)=0,$$ 
in fact, in our \textit{Section 3}, we specify that $\xi$ is tangent to the first factor and from  \cite{genIneq}(\textit{Corollary 3.2}(i)) (which is also trivialy valid for contact CR- warped product submanifold in a nearly cosymplectic manifold), we obtain $(I)$, while the second relation is:
 $$(II)\; \; <A_\zeta X, Y>=<h(X,Y), \zeta>,$$
where $A$ and $h$ are the shape operator and the second fundamental form respectively. 
\\
We consider the matrix of the shape operator putting $v=n-1$. Therefore, in the rest of this paper we demonstrate some geometric and arithmetic inequalities for such matrices of order $v\times v$.
\\
Let $%
\mathbb{M}%
_{v}(%
\mathbb{C}
)$ be the algebra of all $v\times v$ complex matrices. The singular values
$t_{1}(A),...,t_{v}(A)$ of a matrix $A\in%
\mathbb{M}%
_{v}(%
\mathbb{C}
)$ are the eigenvalues of the matrix $\left(  A^{\ast}A\right)  ^{1/2}$
arranged in decreasing order and repeated according to multiplicity. A
Hermitian matrix $A\in%
\mathbb{M}%
_{v}(%
\mathbb{C}
)$ is said to be positive semidefinite, written as $A\geq0$, if $x^{\ast
}Ax\geq0$ for all $x\in%
\mathbb{C}
^{v}$ and it is called positive definite, written as $A>0$, if $x^{\ast}Ax>0$
 for all $x\in%
\mathbb{C}
^{v}$ with $x\neq0$. The Hilbert-Schmidt norm (or the Frobenius norm)
$\left\Vert \cdot\right\Vert _{2}$\ is the norm defined on $%
\mathbb{M}%
_{v}(%
\mathbb{C}
)$\ by $\left\Vert A\right\Vert _{2}=\left(
{\displaystyle\sum\limits_{j=1}^{v}}
t_{j}^{2}(A)\right)  ^{1/2}$, $A\in%
\mathbb{M}%
_{v}(%
\mathbb{C}
)$. The Hilbert-Schmidt norm is unitarily invariant, that is $\left\Vert
UAV\right\Vert _{2}=\left\Vert A\right\Vert _{2}$ for all $A\in%
\mathbb{M}%
_{v}(%
\mathbb{C}
)$ and all unitary matrices $U,V\in%
\mathbb{M}%
_{v}(%
\mathbb{C}
)$. Another property of the Hilbert-Schmidt norm is that $\left\Vert
A\right\Vert _{2}=\left(
{\displaystyle\sum\limits_{i,j=1}^{v}}
\left\vert a_{j}^{\ast}Ab_{i}\right\vert ^{2}\right)  ^{1/2}$, where
$\{b_{j}\}_{j=1}^{v}$ and $\{a_{j}\}_{j=1}^{v}$ are two orthonormal bases of $%
\mathbb{C}
^{v}$. The spectral matrix norm, denoted by $\left\Vert \cdot\right\Vert $, of
a matrix $A\in%
\mathbb{M}%
_{v}(%
\mathbb{C}
)$ is the norm defined by $\left\Vert A\right\Vert =\sup\{\left\Vert
Ax\right\Vert :x\in%
\mathbb{C}
^{v},\left\Vert x\right\Vert =1\}$ or equivalently $\left\Vert A\right\Vert
=t_{1}\left(  A\right)  $, For further properties of these norms the reader is
referred to \cite{44} or \cite{33}. A matrix $A\in%
\mathbb{M}%
_{v}(%
\mathbb{C}
)$ is called contraction if $\left\Vert A\right\Vert \leq1$, or equivalently,
$A^{\ast}A\leq I_{v}$, where $I_{v}$ is the identity matrix in $%
\mathbb{M}%
_{v}(%
\mathbb{C}
)$.

An $v\times v$ $A=(\alpha_{ij})$ is called dobly stochastic if and only if
$\alpha_{ij}\geq0,$ for all $i,j=1,...,v,$ summation all column and row enteries equal
one \cite{55}.

Let $%
\mathbb{M}%
_{v}(%
\mathbb{C}
)$ be the algebra of all $v\times v$ complex matrices. For a matrix $A\in%
\mathbb{M}%
_{v}(%
\mathbb{C}
)$, let $\lambda_{1}(A),...,\lambda_{v}(A)$ be the eigenvalues of $A$ repeated
according to multiplicity. The singular values of $A$, denoted by
$t_{1}(A),...,t_{v}(A)$, are the eigenvalues of the matrix $\left\vert
A\right\vert =\left(  A^{\ast}A\right)  ^{1/2}$ arranged in decreasing order
and repeated according to multiplicity. A Hermitian matrix $A\in%
\mathbb{M}%
_{v}(%
\mathbb{C}
)$ is said to be positive semidefinite if $x^{\ast}Ax\geq0$ for all $x\in%
\mathbb{C}
^{v}$ and it is called positive definite if $x^{\ast}Ax>0$ for all $x\in%
\mathbb{C}
^{v}$ with $x\neq0$. The direct sum of matrices \\ \\ $A_{1},...,A_{u}\in%
\mathbb{M}%
_{v}(%
\mathbb{C}
)$ is the matrix $\oplus_{i=1}^{u}A_{i}=\left[
\begin{array}
[c]{cccc}%
A_{1} & 0 & \cdots & 0\\
0 & A_{2} & \ddots & \vdots\\
\vdots & \ddots & \ddots & 0\\
0 & \cdots & 0 & A_{u}%
\end{array}
\right]  $. 
\\
\\
\\
For two matrices $A_{1},A_{2}\in%
\mathbb{M}%
_{v}(%
\mathbb{C}
),$ we write $A\oplus B$ instead of $\oplus_{i=1}^{2}A_{i}$.

The usual matrix norm $\left\Vert \cdot\right\Vert ,$ the Schatten $p$-norm
($p\geq1$), and the Ky Fan $k$-norms $\left\Vert \cdot\right\Vert _{(k)}$
$\left(  k=1,...,v\right)  $ are the norms defined on $%
\mathbb{M}%
_{v}(%
\mathbb{C}
)$ by $\left\Vert A\right\Vert =\sup\{\left\Vert Ax\right\Vert :x\in%
\mathbb{C}
,\left\Vert x\right\Vert =1\}$, $\left\Vert A\right\Vert _{p}=\sum_{j=1}%
^{v}t_{j}^{p}\left(  A\right)  $, and $\left\Vert A\right\Vert _{(k)}%
=\sum_{j=1}^{k}t_{j}\left(  A\right)  ,$ $k=1,...,v$. It is known that (see,
e.g., \cite[p. 76]{55}) for every $A\in%
\mathbb{M}%
_{v}(%
\mathbb{C}
)$ we have%
\begin{equation}
\left\Vert A\right\Vert =t_{1}\left(  A\right) \label{id1}%
\end{equation}
and for each $k=1,...,v,$ we have%
\begin{equation}
\left\Vert A\right\Vert _{(k)}=\max\left\vert \sum_{j=1}^{k}y_{j}^{\ast}%
Ax_{j}\right\vert ,\label{id0}%
\end{equation}
where the maximum is taken over all choices of orthonormal $k$-tuples
$x_{1},...,x_{k}$ and $y_{1},...,y_{k}$. In fact, replacing each $y_{j}$ by
$z_{j}y_{j}$ for some suitable complex number $z_{j}$ of modulus $1$ for which
$\bar{z}_{j}y_{j}^{\ast}Ax_{j}=\left\vert y_{j}^{\ast}Ax_{j}\right\vert $,
implies that the $k$-tuple $z_{1}y_{1},...,z_{k}y_{k}$ is still orthonormal,
and so an identity equivalent the identity (\ref{id0}) can be seen as follows:%
\begin{equation}
\left\Vert A\right\Vert _{(k)}=\max\sum_{j=1}^{k}\left\vert y_{j}^{\ast}%
Ax_{j}\right\vert ,\label{id2}%
\end{equation}
where the maximum is taken over all choices of orthonormal $k$-tuples
$x_{1},...,x_{k}$ and $y_{1},...,y_{k}$.

A unitarily invariant norm $\left\vert \left\vert \left\vert \cdot\right\vert
\right\vert \right\vert $ is a norm defined on $%
\mathbb{M}%
_{v}(%
\mathbb{C}
)$\ that satisfies the invariance property $\left\vert \left\vert \left\vert
UAV\right\vert \right\vert \right\vert =\left\vert \left\vert \left\vert
A\right\vert \right\vert \right\vert $ for every $A\in%
\mathbb{M}%
_{v}(%
\mathbb{C}
)$ and every unitary matrices $U,V\in%
\mathbb{M}%
_{v}(%
\mathbb{C}
)$. It is known that%
\[
\left\vert \left\vert \left\vert A\oplus A\right\vert \right\vert \right\vert
\geq\left\vert \left\vert \left\vert B\oplus B\right\vert \right\vert
\right\vert \text{ \ \ for every unitarily invariant norm}%
\]
if and only if%
\[
\left\vert \left\vert \left\vert A\right\vert \right\vert \right\vert
\geq\left\vert \left\vert \left\vert B\right\vert \right\vert \right\vert
\text{ \ \ for every unitarily invariant norm.}%
\]
Also,%
\[
\left\vert \left\vert \left\vert A\oplus B\right\vert \right\vert \right\vert
=\left\vert \left\vert \left\vert B\oplus A\right\vert \right\vert \right\vert
=\left\vert \left\vert \left\vert \left[
\begin{array}
[c]{cc}%
0 & B\\
A^{\ast} & 0
\end{array}
\right]  \right\vert \right\vert \right\vert.
\]

Hermitian matrix in real space is symetric matrix, so if $A,B$ are hermitian
matrices and $A-B$ is positive then we say that $B\leq A$ .Wely's monotoniciy
theorem say that the relation imply $\lambda_{j}(B)\leq\lambda_{j}(A)$ for all
$j=1,...,v$.\cite{55}
\\

Let $A,B$ be symmetric matrices $\pm A\leq B. $\; Then $t_{j}(A)\leq t_{j}(B\oplus
B)$. This lead to: $t_{j}(AB+BA)\leq t_{j}((A^{2}+B^{2}%
)\oplus(A^{2}+B^{2}))$ \cite{55}. In the following we construct inequalities of harmonic series.

Now let consider harmonic series in the following form $%
{\displaystyle\sum\limits_{v=1}^{\infty}}
\frac{1}{v}$ or in general form ,$%
{\displaystyle\sum\limits_{v=1}^{\infty}}
\frac{1}{\alpha\text{ }v\text{ }+d}$ where $\alpha\neq0,d$ are real numbers, $\frac
{\alpha}{d}$ is positive, and a generalization of the harmonic series is the
$p-\operatorname{series} $ (or hyperharmonic series), defined as $%
{\displaystyle\sum\limits_{v=1}^{\infty}}
\frac{1}{v^{p}}.$We have the following inequality (from \cite{11}, p. 202), respect to harmonic series.

\begin{lemma}\label{X}
\label{L0001} \ Let $v>1$, be positive integer then
\end{lemma}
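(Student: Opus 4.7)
The plan is to prove the inequality by an integral--sum comparison, which is the natural route when bounding partial harmonic sums $H_v = \sum_{k=1}^{v} \tfrac{1}{k}$; the reference \cite[p.~202]{11} almost certainly gives a two-sided estimate of $H_v$ against $\ln v$ or $\ln(v+1)$, and such estimates follow directly from the strict monotonicity and convexity of $f(x) = 1/x$ on $[1,\infty)$.

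First I would fix an integer $k \geq 1$ and, using the fact that $\tfrac{1}{k+1} \leq \tfrac{1}{x} \leq \tfrac{1}{k}$ for all $x \in [k, k+1]$, integrate over this interval to obtain
\[
\frac{1}{k+1} \;\leq\; \int_{k}^{k+1}\frac{dx}{x} \;\leq\; \frac{1}{k}.
\]
Summing these over the appropriate range of $k$ and telescoping the integrals into a single logarithm then sandwiches $H_v$ between $\ln(v+1)$ and $1+\ln v$, which is the canonical form such a lemma is stated in. The strict inequality for $v>1$ follows immediately from the strict monotonicity of $1/x$, which makes each of the elementary integral estimates strict.

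As a backup, I would carry out an induction on $v$: verify the base case $v=2$ numerically, then use the elementary scalar estimates $\tfrac{t}{1+t} \leq \ln(1+t) \leq t$ applied at $t = 1/v$ to push the bound from $v$ to $v+1$. The main obstacle is not the proof itself but identifying the exact form of the inequality, since the excerpt terminates at ``then'' and the precise constants depend on which display in \cite{11} is being cited; once the target is fixed, either approach above completes the proof routinely, and the resulting harmonic bound can then be fed into the shape-operator matrix inequalities developed earlier in the section via the identification $v=n-1$.
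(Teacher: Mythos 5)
Your plan proves the wrong inequality. The lemma's statement looks truncated only because of how the source is typeset: the display that immediately follows the \verb|\end{lemma}| line, equation (\ref{01}), \emph{is} the content of the lemma, and it concerns the sum of reciprocal \emph{square roots}, not the harmonic sum. Concretely, the claim (quoted from \cite[p.~202]{11}, and left unproved in the paper itself) is
\[
2\sqrt{v+1}-2 \;<\; \sum_{k=1}^{v}\frac{1}{\sqrt{k}} \;<\; 2\sqrt{v}-1 ,
\]
and it is exactly these square-root constants that are consumed later in the section (in the chain (\ref{1}), in Theorem \ref{T010}, in the lemma giving (\ref{2}), and in Theorem \ref{T0}). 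The logarithmic sandwich $\ln(v+1)\leq H_v\leq 1+\ln v$ that your argument produces is a true statement about a different sum and cannot be fed into any of those later estimates, so as written the proposal does not establish the lemma.

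The good news is that your method, not your target, is the right one: the same integral--sum comparison applied to the decreasing function $f(x)=x^{-1/2}$ gives the stated bounds immediately. For the lower bound, $\tfrac{1}{\sqrt{k}} > \int_{k}^{k+1} x^{-1/2}\,dx$ for each $k\geq 1$, and summing over $k=1,\dots,v$ telescopes to $\sum_{k=1}^{v} k^{-1/2} > \int_{1}^{v+1} x^{-1/2}\,dx = 2\sqrt{v+1}-2$. For the upper bound, $\tfrac{1}{\sqrt{k}} < \int_{k-1}^{k} x^{-1/2}\,dx$ for each $k\geq 2$, so $\sum_{k=1}^{v} k^{-1/2} < 1 + \int_{1}^{v} x^{-1/2}\,dx = 2\sqrt{v}-1$, with strictness guaranteed for $v>1$. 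Since the paper offers no proof at all (it simply cites Mitrinovi\'c), supplying this two-line integral argument with the correct integrand would in fact be a useful addition; you only need to replace $1/x$ by $1/\sqrt{x}$ throughout and drop the induction backup, which targets the wrong inequality as well.
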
%

\begin{equation}
2\sqrt{v+1}-2\lessdot%
{\displaystyle\sum\limits_{k=1}^{v}}
\frac{1}{\sqrt{k}}<2\sqrt{v}-1\label{01}%
\end{equation}

\bigskip

so we can see that
\begin{align}%
{\displaystyle\sum\limits_{k=1}^{v}}
\sqrt{k}  & =%
{\displaystyle\sum\limits_{k=1}^{v}}
\frac{k}{\sqrt{k}}\label{1}\\
& \leq\left(
{\displaystyle\sum\limits_{k=1}^{v}}
k\right)  \left(
{\displaystyle\sum\limits_{k=1}^{v}}
\frac{1}{\sqrt{k}}\right) \nonumber\\
& \leq\frac{v(v+1)}{2})\left(  2\sqrt{v}-1\right) \nonumber\\
& =v(v+1)(\sqrt{v}-0.5)\nonumber
\end{align}
more general representation for it by the following theorem.

\begin{theorem}
\label{T010}Let $A\in%
\mathbb{M}%
_{v}(%
\mathbb{C}
)$ be positive definite matrix then%
\[
\left\Vert
{\displaystyle\sum\limits_{k=1}^{v}}
\sqrt{k}A^{k}\right\Vert _{2}<\frac{v(v+1)(\sqrt{v}-0.5)(\left\Vert
A\right\Vert _{2}-\left(  \left\Vert A\right\Vert _{2}\right)  ^{v+1}%
)}{1-\left\Vert A\right\Vert _{2}}%
\]

\end{theorem}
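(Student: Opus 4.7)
The plan is to reduce the matrix inequality to a scalar one using the triangle inequality and submultiplicativity of $\|\cdot\|_{2}$, and then to bound the resulting scalar sum by decoupling the factors $\sqrt{k}$ and $\|A\|_{2}^{k}$, so that the lemma above and the geometric series formula can be applied separately.

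First I would apply the triangle inequality for the Hilbert--Schmidt norm, followed by its submultiplicativity (a direct consequence of Cauchy--Schwarz applied entrywise, as recalled among the properties of $\|\cdot\|_{2}$ in the preliminaries), to obtain
$$\left\|\sum_{k=1}^{v}\sqrt{k}\,A^{k}\right\|_{2}\leq\sum_{k=1}^{v}\sqrt{k}\,\|A^{k}\|_{2}\leq\sum_{k=1}^{v}\sqrt{k}\,\|A\|_{2}^{k}.$$
Since $A$ is positive definite, $\|A\|_{2}>0$, so every term on the right is nonnegative. I would then use the elementary bound $\sum_{k}a_{k}b_{k}\leq(\sum_{k}a_{k})(\sum_{k}b_{k})$, which holds for nonnegative sequences because the omitted cross terms $\sum_{k\neq l}a_{k}b_{l}$ are nonnegative, to decouple the two factors:
$$\sum_{k=1}^{v}\sqrt{k}\,\|A\|_{2}^{k}\leq\left(\sum_{k=1}^{v}\sqrt{k}\right)\!\left(\sum_{k=1}^{v}\|A\|_{2}^{k}\right).$$
The first factor is at most $v(v+1)(\sqrt{v}-0.5)$ by the chain of inequalities (\ref{1}), in which the strict inequality $\sum_{k=1}^{v}\tfrac{1}{\sqrt{k}}<2\sqrt{v}-1$ of Lemma \ref{X} appears; the second factor equals $\tfrac{\|A\|_{2}-\|A\|_{2}^{v+1}}{1-\|A\|_{2}}$ by the geometric sum formula. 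Multiplying the two bounds produces exactly the right-hand side of the theorem, and the strictness is inherited from Lemma \ref{X}.

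The one point that requires care is the borderline case $\|A\|_{2}=1$, where the right-hand side as written is formally $0/0$ and must be interpreted in the limit as $\sum_{k=1}^{v}\|A\|_{2}^{k}=v$; for $\|A\|_{2}\neq 1$ the geometric identity applies directly. Apart from this edge case the proof is a clean chain of standard norm inequalities, and the only conceptual choice is the application of the nonnegative-cross-term bound, which is precisely what produces a closed-form geometric factor on the right and lets the $\sqrt{k}$-sum be handled by Lemma \ref{X} in isolation.
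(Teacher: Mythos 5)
Your proposal is correct and is essentially the paper's own argument: both reduce the matrix inequality to the scalar sum $\sum_{k=1}^{v}\sqrt{k}\,\Vert A\Vert_{2}^{k}$ --- the paper via unitary diagonalization and the estimate $\Vert \mathrm{diag}(t_{1}^{k}(A),\dots,t_{v}^{k}(A))\Vert_{2}\leq \Vert A\Vert_{2}^{k}$, you via submultiplicativity of the Frobenius norm, which amounts to the same step --- and then both apply the bound $\sum_{k=1}^{v}\sqrt{k}\leq v(v+1)(\sqrt{v}-0.5)$ obtained from Lemma \ref{X} through the chain (\ref{1}), followed by the geometric series $\sum_{k=1}^{v}\Vert A\Vert_{2}^{k}=\frac{\Vert A\Vert_{2}-\Vert A\Vert_{2}^{v+1}}{1-\Vert A\Vert_{2}}$. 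Your explicit caveat about the degenerate case $\Vert A\Vert_{2}=1$ (and, implicitly, the requirement $v>1$ inherited from Lemma \ref{X}) addresses a point the paper's proof passes over silently.
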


\begin{proof}
Let \ $A$ \ has singular values $\geq$ $t_{1}(A)\geq...\geq t_{v}(A)$ \ \ and
$U$ be a unitariy matix such that $A=Udig(t_{1}(A),...,t_{v}(A))U^{\ast}$ then.%
\begin{align*}
\left\Vert
{\displaystyle\sum\limits_{k=1}^{v}}
\sqrt{k}A^{k}\right\Vert _{2}  & =\left\Vert
{\displaystyle\sum\limits_{k=1}^{v-}}
dig(\sqrt{k}t_{1}^{k}(A),...,\sqrt{k}t_{v}^{k}(A))\right\Vert _{2}\\
& <%
{\displaystyle\sum\limits_{k=1}^{v-}}
\sqrt{k}\left\Vert dig(t_{1}^{k}(A),...,t_{v}^{k}(A))\right\Vert _{2}\\
& \leq v(v+1)(\sqrt{v}-0.5)%
{\displaystyle\sum\limits_{k=1}^{v-}}
\left\Vert dig(t_{1}^{k}(A),...,t_{v}^{k}(A))\right\Vert _{2}\\
& \leq v(v+1)(\sqrt{v}-0.5)%
{\displaystyle\sum\limits_{k=1}^{v-}}
(\left\Vert dig(t_{1}(A),...,t_{v}(A))\right\Vert _{2})^{k}\\
& =v(v+1)(\sqrt{v}-0.5)%
{\displaystyle\sum\limits_{k=1}^{v-}}
(\left\Vert A\right\Vert _{2})^{k}%
\index{1}%
\\
& =\frac{v(v+1)(\sqrt{v}-0.5)(\left\Vert A\right\Vert _{2}-\left\Vert
A\right\Vert _{2}{}^{v+1}}{1-\left\Vert A\right\Vert _{2}}%
\end{align*}

\end{proof}

We can have a result depend at this theorem for dobuly stochastic
matrices.\label{index1} we get the result imediatley.

\begin{corollary}
\label{C1}Le $A$ be positive definite doubly stochastic matrix then
\[
\left\Vert
{\displaystyle\sum\limits_{k=1}^{v}}
\sqrt{k}A^{k}\right\Vert _{2}\leq v^{2}(v+1)(\sqrt{v}-0.5)
\]

\end{corollary}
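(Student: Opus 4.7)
The plan is to exploit the fact that for a doubly stochastic positive definite matrix every singular value lies in the interval $(0,1]$, which gives a uniform bound on $\|A^{k}\|_{2}$ and sidesteps the denominator $1-\|A\|_{2}$ appearing on the right-hand side of Theorem \ref{T010}. First I would note that since $A$ has nonnegative entries with all row and column sums equal to one, the normalized all-ones vector is a singular vector with singular value $1$, while the Perron-Frobenius theorem forces $t_{j}(A)\leq 1$ for every $j$. Positive definiteness promotes this to $0<t_{j}(A)\leq 1$.

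Next, using the spectral decomposition $A=U\,\mathrm{diag}(t_{1}(A),\ldots,t_{v}(A))\,U^{\ast}$ exactly as in the proof of Theorem \ref{T010}, together with the unitary invariance of $\|\cdot\|_{2}$, one gets
$$\|A^{k}\|_{2}^{2}=\sum_{j=1}^{v}t_{j}^{2k}(A)\leq\sum_{j=1}^{v}1=v,$$
so $\|A^{k}\|_{2}\leq\sqrt{v}$ uniformly in $k$. Applying the triangle inequality for $\|\cdot\|_{2}$ together with the arithmetic bound \eqref{1}, one obtains
$$\left\|\sum_{k=1}^{v}\sqrt{k}\,A^{k}\right\|_{2}\leq\sum_{k=1}^{v}\sqrt{k}\,\|A^{k}\|_{2}\leq\sqrt{v}\sum_{k=1}^{v}\sqrt{k}\leq\sqrt{v}\cdot v(v+1)(\sqrt{v}-0.5).$$
Since $v^{3/2}\leq v^{2}$ for every positive integer $v$, the claimed bound $v^{2}(v+1)(\sqrt{v}-0.5)$ follows.

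The main obstacle is conceptual rather than computational: Theorem \ref{T010} does not close under a direct substitution, because $\|A\|_{2}\geq\|A\|=1$ for any doubly stochastic $A$, so the geometric-series factor $(\|A\|_{2}-\|A\|_{2}^{v+1})/(1-\|A\|_{2})$ is not meaningful as stated. The remedy is to avoid estimating $\|A^{k}\|_{2}$ by $\|A\|_{2}^{k}$ and instead to use the singular-value constraint $t_{j}(A)\leq 1$, which makes every power $A^{k}$ satisfy the same crude estimate $\|A^{k}\|_{2}\leq\sqrt{v}$ and converts $\sum_{k=1}^{v}\sqrt{k}\,\|A^{k}\|_{2}$ into a clean multiple of the partial harmonic-type sum $\sum_{k=1}^{v}\sqrt{k}$ already controlled by \eqref{1}.
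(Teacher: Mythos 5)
Your proof is correct, but it follows a genuinely different route from the paper's proof of Corollary \ref{C1} --- and in fact it repairs it. The paper's entire argument is the assertion that a positive definite doubly stochastic matrix satisfies $0<\Vert A\Vert_{2}\leq 1$, which is then meant to be inserted into Theorem \ref{T010}, the quotient $(\Vert A\Vert_{2}-\Vert A\Vert_{2}^{v+1})/(1-\Vert A\Vert_{2})=\sum_{k=1}^{v}\Vert A\Vert_{2}^{k}\leq v$ supplying the extra factor of $v$. As you point out, that assertion fails for the Hilbert--Schmidt norm: every doubly stochastic matrix has $t_{1}(A)=1$, hence $\Vert A\Vert_{2}\geq 1$, with equality only for the rank-one matrix all of whose entries are $1/v$, which is not positive definite when $v>1$ (for instance $\Vert I_{v}\Vert_{2}=\sqrt{v}$); and even if one reads $\Vert\cdot\Vert_{2}$ as the spectral norm, then $\Vert A\Vert=1$ and the geometric-series quotient becomes indeterminate. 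Your argument sidesteps Theorem \ref{T010} entirely: from $t_{j}(A)\in(0,1]$ (equivalently, from the observation that $A^{k}$ is again doubly stochastic) you obtain the uniform bound $\Vert A^{k}\Vert_{2}\leq\sqrt{v}$, and the triangle inequality together with $\sum_{k=1}^{v}\sqrt{k}\leq v(v+1)(\sqrt{v}-0.5)$ from \eqref{1} yields the sharper estimate $v^{3/2}(v+1)(\sqrt{v}-0.5)$, from which the stated bound follows since $\sqrt{v}\leq v$. What the paper's route would buy, were its norm estimate valid, is a one-line corollary of the theorem already proved; what yours buys is an argument that actually closes, with a stronger constant, using only the structural fact $t_{j}(A)\leq 1$. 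Two minor points: the bound $t_{j}(A)\leq 1$ is most cleanly obtained by noting that $t_{1}^{2}(A)=\rho(A^{\ast}A)$ and that $A^{\ast}A$ is again doubly stochastic, so its spectral radius is at most its maximal row sum, namely $1$ (the full Perron--Frobenius theorem is not needed); and Lemma \ref{X}, hence \eqref{1}, is stated only for $v>1$, so the case $v=1$ should be checked separately (it holds with equality).
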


\begin{proof}
since $A$ be positive definite doubly stochastic matrix, we obtain \\ $0<\left\Vert
A\right\Vert _{2}\leq1$
\end{proof}

\begin{lemma}
Let $v>1$, be positive integer and $x_{k}$, $k=1,2,....v$, be positive number
then%
\begin{equation}
\frac{\min(x_{k})_{1\leq k\leq v}}{v(v+1)(\sqrt{v}-0.5)}\lessdot%
{\displaystyle\sum\limits_{k=1}^{v}}
\frac{x_{k}}{\sqrt{k}}<v(2\sqrt{v}-1)\max(x_{k})_{1\leq k\leq v}\label{2}%
\end{equation}

\end{lemma}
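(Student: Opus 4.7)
The plan is to sandwich the sum $\sum_{k=1}^{v} x_k/\sqrt{k}$ between its $\min$-weighted and $\max$-weighted versions, and then bound the two resulting harmonic-type sums using Lemma \ref{X} and the estimate already derived in the chain \eqref{1}. Both directions of the inequality are then obtained as plain substitutions.

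For the upper bound, I first observe $\sum_{k=1}^{v} x_k \leq v\,\max(x_k)$, and then mimic the product-of-sums step that the author uses in \eqref{1}:
\[
\sum_{k=1}^{v} \frac{x_k}{\sqrt{k}} \leq \left(\sum_{k=1}^{v} x_k\right)\left(\sum_{k=1}^{v} \frac{1}{\sqrt{k}}\right).
\]
Applying the right-hand inequality of \eqref{01} and the bound $\sum x_k \leq v\,\max(x_k)$ immediately gives
\[
\sum_{k=1}^{v} \frac{x_k}{\sqrt{k}} < v(2\sqrt{v}-1)\max(x_k),
\]
which is exactly the upper half of \eqref{2}.

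For the lower bound, I pull out the minimum,
\[
\sum_{k=1}^{v} \frac{x_k}{\sqrt{k}} \geq \min(x_k)\sum_{k=1}^{v}\frac{1}{\sqrt{k}},
\]
and then lower-bound $\sum 1/\sqrt{k}$ by Cauchy--Schwarz applied to the positive sequences $\sqrt{k}$ and $1/\sqrt{k}$:
\[
v^{2} = \left(\sum_{k=1}^{v} 1\right)^{2} \leq \left(\sum_{k=1}^{v}\sqrt{k}\right)\left(\sum_{k=1}^{v}\frac{1}{\sqrt{k}}\right).
\]
Combining this with $\sum \sqrt{k} \leq v(v+1)(\sqrt{v}-0.5)$ from \eqref{1} yields
\[
\sum_{k=1}^{v}\frac{1}{\sqrt{k}} \geq \frac{v}{(v+1)(\sqrt{v}-0.5)} \geq \frac{1}{v(v+1)(\sqrt{v}-0.5)},
\]
where the last step uses $v \geq 1$. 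Multiplying through by $\min(x_k)$ gives the desired left-hand half of \eqref{2}.

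The main obstacle is not depth but calibrating the unusually loose constants appearing in the statement. In particular, one must deliberately route the upper bound through the product-of-sums estimate \eqref{1} (which carries the extra factor of $v$ absent from \eqref{01}), and route the lower bound through the reciprocal sum $\sum\sqrt{k}$ rather than the sharper bound $2\sqrt{v+1}-2$ from \eqref{01}. Once this matching of constants is in place, both inequalities follow from a single pass.
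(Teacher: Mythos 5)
Your proposal is correct, and the upper half is exactly the paper's argument: both of you run the (deliberately loose) chain $\sum_k x_k/\sqrt{k}\le\bigl(\sum_k x_k\bigr)\bigl(\sum_k 1/\sqrt{k}\bigr)<v\,(2\sqrt{v}-1)\max(x_k)$ via the right-hand side of \eqref{01}. For the lower half you diverge from the paper: the paper bounds termwise $x_k/\sqrt{k}\ge x_k/\sum_j\sqrt{j}$ to get $\sum_k x_k/\sqrt{k}\ge\bigl(\sum_k x_k\bigr)/\bigl(\sum_k\sqrt{k}\bigr)$, then uses $\sum_k x_k\ge v\min(x_k)$ together with $\sum_k\sqrt{k}\le v(v+1)(\sqrt{v}-0.5)$ from \eqref{1}, arriving at $\min(x_k)/\bigl((v+1)(\sqrt{v}-0.5)\bigr)$, which dominates the stated constant. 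You instead factor out $\min(x_k)$ and lower-bound $\sum_k 1/\sqrt{k}$ by Cauchy--Schwarz against $\sum_k\sqrt{k}$, again invoking \eqref{1}; this yields $v\min(x_k)/\bigl((v+1)(\sqrt{v}-0.5)\bigr)$, a factor $v$ stronger than the paper's intermediate bound, before you weaken it (using $v\ge 1$) to the stated $\min(x_k)/\bigl(v(v+1)(\sqrt{v}-0.5)\bigr)$. Both routes are valid and both overshoot the lemma's very loose left-hand constant; your version needs the extra (elementary) Cauchy--Schwarz step but gives the sharper intermediate estimate, while the paper's termwise trick is more bare-handed. One could even shortcut your lower bound by using the left inequality of \eqref{01} directly, $\sum_k 1/\sqrt{k}>2\sqrt{v+1}-2$, at the cost of a small comparison of constants, so your choice to route through $\sum_k\sqrt{k}$ is a matter of matching the statement rather than necessity.
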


\begin{proof}
Since%
\begin{align*}%
{\displaystyle\sum\limits_{k=1}^{v}}
\frac{x_{k}}{\sqrt{k}}  & \leq\left(
{\displaystyle\sum\limits_{k=1}^{v}}
x_{k}\right)  \left(
{\displaystyle\sum\limits_{k=1}^{v}}
\frac{1}{\sqrt{k}}\right) \\
& <v(2\sqrt{v}-1)\max(x_{k})_{1\leq k\leq v}%
\text{\ \ \ \ \ \ \ \ \ \ \ \ \ (by the inequality (\ref{01}))}%
\end{align*}

The left side of the inequlity we can have%
\begin{align}%
{\displaystyle\sum\limits_{k=1}^{v}}
\frac{x_{k}}{\sqrt{k}}  & \geq\frac{%
{\displaystyle\sum\limits_{k=1}^{v}}
x_{k}}{%
{\displaystyle\sum\limits_{k=1}^{v}}
\sqrt{k}}\label{3}\\
& >\frac{v\min(x_{k})_{1\leq k\leq v}}{v(v+1)(\sqrt{v}-0.5)}\text{ \ (by the
inequality (\ref{1}))}\nonumber\\
& =\frac{\min(x_{k})_{1\leq k\leq v}}{(v+1)(\sqrt{v}-0.5)}\nonumber
\end{align}

\end{proof}

Based on the inequality (\ref{2}) we have the following Theorem

\begin{theorem}
\label{T0}Let $A,X\in%
\mathbb{M}%
_{v}(%
\mathbb{C}
)$, be positive definite matrice and let $\left\lceil t_{k}(A)\right\rceil
,\left\lfloor t_{k}(A)\right\rfloor \leq v$. Then%
\[
t_{v}(X)(2\sqrt{\left\lfloor t_{k}(A)\right\rfloor +1}-2)\lessdot%
{\displaystyle\sum\limits_{k=1}^{\lfloor t_{k}(A)\rfloor}}
t_{k}(X\text{ }A^{-0.5})<(2\sqrt{\left\lceil t_{k}(A)\right\rceil }-1)t_{1}(X)
\]

\end{theorem}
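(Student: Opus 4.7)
The plan is to transfer the scalar harmonic inequality of Lemma \ref{X} into a singular-value inequality for the matrix product $XA^{-0.5}$, using the positive definiteness of $A$ to identify the singular values $t_{k}(A^{-0.5})$ with reciprocals of the form $1/\sqrt{k}$, and then bounding $t_{k}(XA^{-0.5})$ from above and below by the extremal singular values $t_{1}(X)$ and $t_{v}(X)$ of $X$.

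First I would set $N$ to be the integer appearing as the upper summation limit (read out of the hypothesis $\lceil t_{k}(A)\rceil, \lfloor t_{k}(A)\rfloor \le v$) and write the spectral decomposition $A = U\,\mathrm{diag}(t_{1}(A),\ldots,t_{v}(A))\,U^{\ast}$, so that $A^{-0.5} = U\,\mathrm{diag}\bigl(t_{1}(A)^{-1/2},\ldots,t_{v}(A)^{-1/2}\bigr)U^{\ast}$ and the singular values of $A^{-0.5}$, arranged in decreasing order, are $t_{k}(A^{-0.5}) = t_{v-k+1}(A)^{-1/2}$. Under the hypothesis controlling $\lceil t_{k}(A)\rceil$ and $\lfloor t_{k}(A)\rfloor$, the $k$-th singular value of $A^{-0.5}$ can be sandwiched between $1/\sqrt{k+1}$ and $1/\sqrt{k}$ (or the corresponding floor/ceiling versions), exactly the terms that appear when one bounds partial harmonic sums.

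Next I would apply the standard sub-multiplicative singular-value inequalities
\[
t_{v}(X)\,t_{k}(A^{-0.5})\;\le\;t_{k}(XA^{-0.5})\;\le\;t_{1}(X)\,t_{k}(A^{-0.5}),
\]
which follow from the min-max characterization and Weyl's monotonicity theorem recalled just before Lemma \ref{X}. Summing from $k=1$ to $N$ and using the identification of $t_{k}(A^{-0.5})$ with $1/\sqrt{k}$ (up to the floor/ceiling shift) yields
\[
t_{v}(X)\sum_{k=1}^{N}\frac{1}{\sqrt{k}}\;\le\;\sum_{k=1}^{N}t_{k}(XA^{-0.5})\;\le\;t_{1}(X)\sum_{k=1}^{N}\frac{1}{\sqrt{k}}.
\]
Invoking Lemma \ref{X} on the scalar sum, namely $2\sqrt{N+1}-2 \lessdot \sum_{k=1}^{N}\frac{1}{\sqrt{k}} < 2\sqrt{N}-1$, and substituting back the floor/ceiling expressions in terms of $t_{k}(A)$ gives the desired two-sided inequality.

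The main obstacle will be the second step: making the identification between the index $k$ in the harmonic sum and the singular value $t_{k}(A^{-0.5})$ rigorous when the $t_{k}(A)$ are not exactly $1,2,\ldots,v$. This is precisely what forces the asymmetry between $\lfloor t_{k}(A)\rfloor+1$ on the lower side and $\lceil t_{k}(A)\rceil$ on the upper side of the conclusion; care must be taken that after the floor/ceiling rounding the resulting comparison with $1/\sqrt{k}$ still preserves the strict inequality of Lemma \ref{X}, and that the extremal singular values $t_{v}(X)$ and $t_{1}(X)$ are indeed the correct multiplicative constants in each direction (the lower bound requires the positive definiteness of $X$ in an essential way, since otherwise $t_{v}(X)$ could be zero and the lower estimate degenerate).
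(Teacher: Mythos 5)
Your proposal follows essentially the same route as the paper's own proof: the two-sided singular-value bound $t_{v}(X)\,t_{k}(A^{-0.5})\le t_{k}(XA^{-0.5})\le t_{1}(X)\,t_{k}(A^{-0.5})$, the replacement of $t_{k}(A^{-0.5})$ by the floor/ceiling reciprocal square roots, and then Lemma \ref{X} applied to the resulting partial sums. The identification step you flag as the main obstacle is exactly the point the paper itself passes over (it simply writes $t_{k}(A^{-0.5})=\left\lceil t_{k}(A)\right\rceil ^{-0.5}$ resp. $\left\lfloor t_{k}(A)\right\rfloor ^{-0.5}$), so your treatment is, if anything, more careful but not a different argument.
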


\begin{proof}
Since
\begin{align*}%
{\displaystyle\sum\limits_{k=1}^{\lfloor t_{k}(A)\rfloor}}
t_{k}(X\text{ }A^{-0.5})  & \leq%
{\displaystyle\sum\limits_{k=1}^{\lfloor t_{k}(A)\rfloor}}
t_{1}(X\text{ })t_{k}(A^{-0.5})\\
& =%
{\displaystyle\sum\limits_{k=1}^{\lfloor t_{k}(A)\rfloor}}
t_{1}(X\text{ })\left\lceil t_{k}(A)\right\rceil ^{-0.5}\\
& <t_{1}(X\text{ })(2\sqrt{\left\lceil t_{k}(A)\right\rceil }-1)\text{ \ by
lemma } \ref{X}%
\end{align*}
The left side inequality, since%
\begin{align*}%
{\displaystyle\sum\limits_{k=1}^{\lfloor t_{k}(A)\rfloor}}
t_{k}(X\text{ }A^{-0.5})  & \geq%
{\displaystyle\sum\limits_{k=1}^{\lfloor t_{k}(A)\rfloor}}
t_{v}(X)t_{k}(A^{-0.5})\\
& =%
{\displaystyle\sum\limits_{k=1}^{\lfloor t_{k}(A)\rfloor}}
t_{v}(X\text{ })\left\lfloor t_{k}(A)\right\rfloor ^{-0.5}\\
& >t_{v}(X)(2\sqrt{\left\lfloor t_{k}(A)\right\rfloor +1}-2)\text{\ by
lemma } \ref{X}%
\end{align*}

\end{proof}

\section{Research problems based on First Chen inequality}

Due to the results of this paper, we hypothesize a pair of open problems.

Firstly, we suggest the following:
\begin{problem}\label{ama1}
Prove the inequalities of the second fundamental form for $CR$-submanifolds in generalized Sasakian space forms.
\end{problem}

Secondly, we ask:
\begin{problem}\label{pqm2}
Prove the rest of inequalities of this paper for the matrix of the second fundamental form obtained in previous problem.
\end{problem}

\vskip.15in
\begin{acknowledgements}
The  authors  would like to thank the Palestine Technical University Kadoori, PTUK, for its supports to accomplish this work.
\end{acknowledgements}


\begin{thebibliography}{110}
\bibitem {55} R. Bhatia, F. Kittaneh, {\textit{The Matrix ArithmeticñGeometric Mean inequality Revisited,Linear Algebra and its Applications}}, 428, 2177ñ2191, (2008).

\bibitem{aallr4}  B.-Y. Chen, {\textit{Relations between Ricci curvature and shape operator for submanifolds with arbitrary codimensions}},  Glasgow Math. J. {\bf{41}} (1999), 33-41.

\bibitem{2233ee} B.-Y. Chen,{\textit{Geometry of warped products as Riemannian submanifolds and related problems}}, Soochow J. Math. {\bf{28}} (2002), 125-156.

\bibitem{55kk99} B.-Y. Chen, {\textit{$\delta$-invariants, inequalities of submanifolds and their applications: in Topics in differential Geometry}}, Editura Academiei Rom$\hat a$ne, Bucharest (2008), 29-155.

\bibitem{11} D. S. Mitrinovi¥c, {\textit{Analytic Inequalities}}, New yourk: Springer-Verlag, (1970).

\bibitem{genIneq}  A. Mustafa, C. \"Ozel, P. Linker, M. Sati, A. Pigazzini, {\textit{A general inequality for warped product CR-submanifolds of Kähler manifolds}}, Hacet. j. Math. Stat. https://doi.org/10.15672/hujms.1018497, (2022). 

\bibitem{iijj77} B. O'Neill, {\textit{Semi-Riemannian geometry with applictions to relativity}} Academic Press, New York, (1983).

\bibitem {33} J. Ringrose, {\textit{Compact Non-Self-Adjoint Operators}}, London: Nework, Van Nostrand Reinhold Co., (1971).

\bibitem{44} B. Simon, {\textit{Trace Ideals and Their applications}}, Cambridge University Press, (1979).



 







\end{thebibliography}
\end{document}